\documentclass{amsart}

\usepackage{amsmath,amssymb,amsfonts,verbatim}

\usepackage{color}
\usepackage{graphicx}
\usepackage{tikz}
\usepackage{tkz-euclide}
\usepackage{subcaption}
\usepackage{hyperref}

\usepackage{svg}

\linespread{1.3}
\usepackage{geometry}
\geometry{
  a4paper,margin = 15mm,top=25mm,bottom=25mm,bindingoffset=5mm
}

\newcommand{\R}{\mathcal{R}}

\newcommand{\F}{\mathcal{F}}
\newcommand{\U}{\mathcal{U}}
\newcommand{\N}{\mathcal{N}}
\newcommand{\PA}{\mathcal{P}}
\newtheorem{theorem}{Theorem}[section]
\newtheorem{lemma}[theorem]{Lemma}

\theoremstyle{definition}

\theoremstyle{remark}
\newtheorem{remark}[theorem]{Remark}

\numberwithin{equation}{section}



\begin{document}

\title{Remarks on differential inclusion limits of stochastic approximation}

\author{Vivek S.\ Borkar}
\address{Department of Electrical Engineering, Indian Institute of Technology Bombay, Powai, Mumbai 400076, India}
\email{borkar.vs@gmail.com}
\thanks{The first author was supported in part by a S.\ S.\ Bhatnagar Fellowship from CSIR, Government of India.}

\author{Dhruv A.\ Shah}
\address{Department of Electrical Engineering, Indian Institute of Technology Bombay, Powai, Mumbai 400076, India}
\email{190020039@iitb.ac.in}
\subjclass[2000]{Primary 62L20; Secondary 34A60; 34D05}
\keywords{stochastic approximation; discontinuous dynamics; differential inclusion; Filippov solution; asymptotic behavior}

\begin{abstract}
For stochastic approximation algorithms with discontinuous dynamics, it is shown that under suitable distributional assumptions, the interpolated iterates track a Fillipov solution of the limiting differential inclusion. In addition, we give an alternative control theoretic approach to recent results of \cite{Bianchi} on certain limiting empirical measures associated with the iteration.
\end{abstract}

\maketitle

\section{Introduction}

One popular approach to the analysis of stochastic approximation algorithms is to look at their limiting differential equations and interpret the time-asymptotic behavior of the algorithm in terms of that of the differential equation. There are, however, many situations wherein the limiting dynamics is not a differential equation, but a differential inclusion. One important situation where this happens is when the update equation for the algorithm involves a discontinuous function. This leads to a differential equation with a discontinuous right hand side. This is ill-posed in the classical sense. So one resorts to different solution concepts \cite{Cortes} which lead to differential inclusions. There is a considerable body of work in this direction (some of which we outline below). Our contribution here is twofold: First, to refine the set-valued map in the limiting differential equation from a Krasovskii map to a Filippov map under suitable additional conditions, and second, to provide an alternative control theoretic approach to some important recent contributions of \cite{Bianchi}, with some new insights.

We recall the basic facts about stochastic approximation iterates and their differential equation, resp., differential inclusion limits in the next section. Section 3 describes our main results. 

\section{Stochastic approximation: preliminaries}

Here we briefly recall the relevant aspects of the stochastic approximation algorithm. Introduced by Robbins and Monro in \cite{RM}, it is a scheme for finding roots of a nonlinear map $h: \R^d\mapsto \R^d$ given its noisy measurements. Specifically, it is the iteration
\begin{equation}
x(n+1) = x(n) + a(n)(h(x(n)) + M(n+1)), \ n \geq 0, \label{RM}
\end{equation}
where $\{M(n)\}$ is a martingale difference sequence with respect to the increasing $\sigma$-fields 
$$\F_n := \sigma(x(0), M(m), m \leq n), n \geq 0, $$ 
representing measurement noise, and $\{a(n)\}$ is a stepsize sequence satisfying
\begin{equation}
a(n) \geq 0 \ \forall \ n \geq 0, \ \sum_na(n) = \infty, \ \sum_na(n)^2 < \infty. \label{step}
\end{equation}
Typical assumptions imposed are: $h$ is Lipschitz and $\{M(n)\}$ satisfies, for some $K > 0$, 
\begin{equation}
\sup_nE\left[\|M(n)\|^4\right] < \infty \ \ \mbox{and} \  \ E\left[\|M(n+1)\|^2 | \F_n\right] \leq K\left(1 + \|x(n)\|^2\right). \label{mgbound}
\end{equation}
While the initial contributions analyzed (\ref{RM}) using probabilistic techniques, another approach developed since the 70's \cite{DerFra, Meerkov, Ljung, Ben1, Ben2} treats (\ref{RM}) as a noisy Euler scheme for the ODE (for `\textit{Ordinary Differential Equation}')
\begin{equation}
\dot{x}(t) = h(x(t)). \label{ode}
\end{equation}
Under suitable conditions, one can show that $\{x(n)\}$ a.s.\  ($:=$ almost surely with respect to the underlying probability   measure) tracks the asymptotic behavior of (\ref{ode}) as $n\uparrow\infty$. The argument goes as follows. (See \cite{Borkar}, Chapter 2, for details). Define the `algorithmic time scale' $t(n), n \geq 0,$ by: $t(0) := 0, t(n) := \sum_{m=0}^{n-1}a(m)$. Then by (\ref{step}), $t(n) \uparrow \infty$. Define $\bar{x}(t), t \geq 0$, by: $\bar{x}(t(n)) := x(n) \ \forall n$ with linear interpolation on each interval $[t(n),t(n+1)]$, so that it is a continuous piecewise linear curve. Fix $T > 0$ and for $n\geq 0$, define $m(n) := \min\{k: t(k) - t(n) \geq t(n)+T\}$. On the interval $[t(n), t(m(n))]$, define the ODE
$$\dot{y}^n(t) = h(y^n(t)), \ t \in [t(n),t(m(n))], \ y^n(t(n)) = x(n).$$
If one can establish a.s.\ boundedness of the iterates, i.e., $\sup_n\|x(n)\|<\infty$ a.s.\ (this usually needs a separate proof), then (\ref{mgbound}) and the square-summability of $\{a(n)\}$ ensures that the $\{\F_n\}$-martingale $\sum_{m=1}^{n-1}a(m)M(m+1)$ converges a.s.\ (Proposition VII.2.3(c), p.\ 149,  of \cite{Neveu}). This and the fact $a(n) \to 0$ implied by (\ref{step}) together imply, via the Gronwall inequality, that 
\begin{equation}
\lim_{n\uparrow\infty}\max_{t\in[t(n),t(m(n))]}\|\bar{x}(t)-y^n(t)\|\to 0 \ \mbox{a.s.} \label{track}
\end{equation}
Here the martingale convergence and vanishing stepsize lead to the vanishing of errors due to noise and discretization, respectively. A similar argument works for $T < 0$. This in turn leads to the following characterization of the asymptotic behavior of $\{x(n)\}$ due to Benaim \cite{Ben1,Ben2}.

\begin{theorem}\label{Benaim}
Almost surely, $x(n)\to$ a nonempty compact connected internally chain transitive invariant set of (\ref{ode}). \end{theorem}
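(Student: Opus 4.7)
The plan is to deduce Theorem~\ref{Benaim} from the pathwise tracking estimate (\ref{track}) by recognizing $\bar{x}(\cdot)$ as an \emph{asymptotic pseudotrajectory} (APT) of the semiflow $\Phi_t$ generated by (\ref{ode}), in the sense of Benaim--Hirsch, and then invoking the general structural theorem that the $\omega$-limit set of any bounded APT is nonempty, compact, connected, invariant and internally chain transitive.

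First, rewrite (\ref{track}) in the shifted form
\begin{equation*}
\lim_{n\to\infty}\sup_{0\leq s\leq T}\|\bar{x}(t(n)+s) - \Phi_s(x(n))\| = 0 \quad \mbox{a.s.,}
\end{equation*}
which is precisely the APT property on the algorithmic time scale. Since $\sup_n\|x(n)\|<\infty$ a.s.\ (assumed), $\bar{x}(\cdot)$ is a.s.\ bounded, and hence
\begin{equation*}
L := \bigcap_{T\geq 0}\overline{\bigcup_{t\geq T}\{\bar{x}(t)\}}
\end{equation*}
is a.s.\ nonempty and compact. Invariance is immediate from the APT property: if $y\in L$ with $\bar{x}(t_k)\to y$ along some $t_k\uparrow\infty$, then for each fixed $s\geq 0$ one has $\bar{x}(t_k+s) - \Phi_s(\bar{x}(t_k))\to 0$, and continuity of $\Phi_s$ gives $\Phi_s(y)\in L$; backward invariance follows from the analogous tracking statement for $T<0$ noted in the preceding paragraph. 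Connectedness rests on the observation that $a(n)\to 0$ forces consecutive values of $\bar{x}(\cdot)$ to be close, which prevents $L$ from separating into two disjoint closed pieces (any such separation would oblige $\bar{x}$ to cross a fixed gap in a single step infinitely often).

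The heart of the argument, and the step I expect to be the main obstacle, is \emph{internal chain transitivity}: given $y,z\in L$ and $\varepsilon,T>0$, one must produce points $y=y_0,y_1,\ldots,y_k=z$ in $L$ and times $T_i\geq T$ with $\|\Phi_{T_i}(y_i)-y_{i+1}\|<\varepsilon$. The construction exploits the recurrence encoded in $L$: every point of $L$ is approached by $\bar{x}(\cdot)$ infinitely often, so one can interleave long segments of the true trajectory (which, by the APT property, shadow genuine orbits of $\Phi$) with small corrections between nearby points of $L$. A finite covering of $L$ by $\varepsilon/2$-balls, together with the uniform continuity of $(t,x)\mapsto\Phi_t(x)$ on bounded sets, lets one convert returns of $\bar{x}(\cdot)$ to these balls into the desired $(\varepsilon,T)$-chain, with the delicate point being that the intermediate waypoints $y_i$ must themselves lie in $L$ rather than merely close to it. The complete execution of this argument is carried out by Benaim in \cite{Ben1,Ben2}, whose statement we are simply transcribing.
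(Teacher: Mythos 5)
Your outline is essentially the paper's own route: the paper derives the tracking estimate (\ref{track}) and then simply invokes Benaim's limit-set theorem for bounded asymptotic pseudotrajectories, deferring all details---including the internal chain transitivity argument you correctly flag as the crux---to \cite{Ben1,Ben2} and Chapter 2 of \cite{Borkar}. Your sketch of nonemptiness, compactness, connectedness and invariance is sound, and your deferral of the chain-transitivity construction to those same references matches exactly what the paper does, so there is nothing to add.
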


See \textit{ibid.} or Chapter 2 of \cite{Borkar} for details. A more general situation is that of a \textit{stochastic recursive inclusion}, i.e., the iteration
\begin{equation}
x(n+1) = x(n) + a(n)(z(n) + M(n+1)), \ z(n) \in H(x(n)) \ n \geq 0, \label{RM2}
\end{equation}
where $H: x\in\R^d \mapsto H(x)\subset \R^d$ is a set-valued map and $\{M(n)\}$ is as above. We assume that $x \mapsto H(x)$ is a nonempty compact and convex valued map which is upper semicontinuous (i.e., its graph $\{(x,z): z\in H(x)\}$ is closed) and satisfies the linear growth condition
$$\max_{z \in H(x)}\|z\| \  \leq  \ K'(1+\|x\|)$$
for some $K'>0$. Then one looks at a \textit{differential inclusion} limit instead of an o.d.e. This differential inclusion is
\begin{equation}
\dot{x}(t) \in H(x(t)). \label{DiffIncl}
\end{equation}
Following the pioneering work of Benaim, Hofbauer and Sorin \cite{Benaim1, Benaim2}, which, among other things, establish a counterpart of Theorem \ref{Benaim} for this case, this iteration has been extensively studied in literature, some of it motivated by applications to reinforcement learning \cite{Bianchi, Faure1, Faure2, Ram1, Ram2, Yaji1, Yaji2, Yaji3}. The aforementioned extension of Theorem \ref{Benaim}  is as follows.

 \begin{theorem}\label{Benaim2}
Almost surely, $\bar{x}(\cdot)$ asymptotically tracks a solution of (\ref{DiffIncl}) in the sense that (\ref{track}) holds with $y^n(\cdot) :=$ a solution of (\ref{DiffIncl}) on $[t(n), t(m(n))]$  with $y^n(t(n)) = x(n) \ \forall n$. Furthermore, $x(n)\to$ a nonempty compact connected internally chain transitive invariant set of (\ref{ode}). \end{theorem}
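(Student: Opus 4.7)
The plan is to mimic the argument for Theorem~\ref{Benaim} while handling the set-valued dynamics. The role of the exact ODE trajectory $y^n$ will be played by a piecewise constant selection of $H$ along the iterates; a subsequential compactness argument then shows that this selection limits on genuine solutions of (\ref{DiffIncl}), using crucially the upper semicontinuity and convex compact values of $H$.

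First I would define the piecewise constant driving field $\bar{z}(t) := z(n)$ for $t\in[t(n),t(n+1))$. Since $\sup_n\|x(n)\|<\infty$ a.s.\ (proved separately, as in the ODE case) and $H$ has linear growth, $\sup_t\|\bar{z}(t)\|<\infty$ a.s. At grid points
\[
\bar{x}(t(k+1)) - \bar{x}(t(k)) = a(k)z(k) + a(k)M(k+1) = \int_{t(k)}^{t(k+1)}\bar{z}(s)\,ds + a(k)M(k+1),
\]
so telescoping and absorbing the $O(a(n))$ linear-interpolation error on $[t(n),t(n+1)]$ gives, on $[t(n),t(m(n))]$, the identity $\bar{x}(t)=\bar{x}(t(n)) + \int_{t(n)}^t \bar{z}(s)\,ds + \eta_n(t)$, with $\sup_{t}\|\eta_n(t)\|\to 0$ a.s.\ by exactly the martingale-convergence plus $a(n)\to 0$ argument used for (\ref{track}) in the ODE setting.

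Next I would extract subsequential limits. On a sample path outside a null set, the shifted process $u_n(\cdot):=\bar{x}(t(n)+\cdot)$ restricted to $[0,T]$ is uniformly bounded and Lipschitz up to the vanishing error $\eta_n$, hence relatively compact in $C([0,T];\R^d)$ by Arzel\`a--Ascoli; simultaneously $v_n(\cdot):=\bar{z}(t(n)+\cdot)$ is bounded in $L^\infty$, hence relatively compact in the weak topology of $L^2([0,T];\R^d)$. Passing to a joint subsequence one obtains $u_n\to y$ uniformly and $v_n\to v$ weakly, with $y(t)-y(0)=\int_0^t v(s)\,ds$, i.e.\ $\dot y = v$ a.e.

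The substantive step is to show $v(t)\in H(y(t))$ a.e. Here I would invoke Mazur's lemma to replace $v_n$ by convex combinations converging almost everywhere to $v$; each such convex combination lies (by convex values) in a convex combination of $H$ evaluated at points $\bar{x}(t(n_k)+s_k)$ with $s_k$ close to $s$, and these points converge to $y(s)$ by the uniform convergence of $u_n$. Upper semicontinuity of $H$ (closed graph) together with compact convex values then forces $v(s)\in H(y(s))$ for a.e.\ $s$. With this in place, for each $n$ one selects a solution $y^n$ of (\ref{DiffIncl}) on $[t(n),t(m(n))]$ with $y^n(t(n))=x(n)$ realizing the infimal distance to $\bar x$; any putative failure of (\ref{track}) would, via the compactness extraction, furnish a subsequential limit contradicting the inclusion just established. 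Finally, the internally chain transitive invariance conclusion follows verbatim from the abstract perturbed-solutions framework of \cite{Benaim1, Benaim2} once (\ref{track}) has been verified.

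The principal obstacle is the pointwise inclusion in the preceding paragraph: weak limits of selections of a merely upper semicontinuous $H$ need not be selections of $H$ along the limit trajectory, and the argument genuinely depends on each $H(x)$ being convex and compact. Without this convexity one could only conclude $v(t)\in\overline{\mathrm{co}}\,H(y(t))$---a Krasovskii-type map rather than a Filippov one---which, incidentally, is precisely the gap the main results of the paper propose to close.
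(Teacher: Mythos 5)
The paper does not actually prove Theorem \ref{Benaim2}: it is recalled from Benaim--Hofbauer--Sorin \cite{Benaim1,Benaim2} (see also Chapter 5 of \cite{Borkar}), so the only meaningful comparison is with their argument, which your sketch essentially reconstructs. The integral representation of $\bar{x}$ via the piecewise constant selection $\bar z$ plus a vanishing perturbation, the Arzel\`a--Ascoli extraction for the shifted trajectories combined with weak $L^2$ compactness of the selections, and the Mazur/closed-graph/convexity step (the standard ``convergence theorem'' for differential inclusions) showing $v(s)\in H(y(s))$ a.e.\ are all correct, and you are right that convexity of the values is what prevents the limit from landing only in a convexified image. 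One aside is off the mark, though: the Krasovskii-versus-Filippov distinction that the paper's main results address concerns discarding Lebesgue-null sets in the definition of the set-valued map, not convexification --- both $K_h$ and $F_h$ are already closed convex valued, so losing convexity in this step is not ``the gap the paper closes.''

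The step that does not go through as written is the final contradiction argument for (\ref{track}) with the solution pinned at $y^n(t(n))=x(n)$. Your compactness extraction produces a solution $y$ of (\ref{DiffIncl}) with $y(0)=\lim_k x(n_k)$, not one issued from $x(n_k)$ itself; to derive a contradiction you would need a genuine solution starting exactly at $x(n_k)$ that is uniformly close to $y$, i.e., \emph{lower} semicontinuity of the solution map $x_0\mapsto S_{x_0}$. For merely upper semicontinuous compact convex valued $H$ this fails: with $H(x)=\{\mathrm{sign}(x)\}$ for $x\neq 0$ and $H(0)=[-1,1]$, no exact solution from $x_0>0$ is near the solution that leaves $0$ to the left, yet perturbed trajectories started at small positive $x_0$ can follow it once the perturbation carries them across the origin. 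This is precisely why \cite{Benaim1} formulate the asymptotic pseudotrajectory property as convergence of the translates $\bar x(t+\cdot)$ to the set of \emph{all} solutions with initial conditions in a relevant compact set --- which is exactly what your subsequential argument delivers, and which is all that the internally-chain-transitive limit set theorem requires. You should either prove the tracking statement in that (standard) form, or add an argument justifying the pinning of the initial condition at $x(n)$; as it stands, that justification is missing.
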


Here, an invariant set is a set of points $x$ such that \textit{some} solution of (\ref{DiffIncl}) passing through $x$ remains in the set for all time. 

A special case of interest is (\ref{ode}) when the map $h$ is discontinuous and the usual theory for well-posedness of (\ref{ode}) does not apply. The standard approach (see \cite{Yin} and its references) has been to  treat (\ref{ode}) as a special case of (\ref{DiffIncl}) by setting $z(n) = h(x(n))$ and 
$$H(x) = K_h(x) := \cap_{\delta>0}\overline{co}\left(\{h(y) : \|y-x\|<\delta\}\right).$$
Any trajectory thereof is known as a Krasovskii solution to (\ref{ode}) \cite{Krasov}, one of the many solution concepts for differential equations driven by discontinuous vector fields. (See \cite{Cortes} for a recent survey of the various solution concepts and their inter-relationships.)

\section{Main results}

We assume throughout that
\begin{equation}
\sup_n\|x(n)\| < \infty \ \mbox{a.s.}  \label{stable}
\end{equation}
We shall denote by $\PA(S)$ the Polish space of probability measures on a Polish space $S$ with the Prohorov topology. $C_b(S)$ will denote the space of bounded continuous functions $S \mapsto \R$.

We present two results in this section. The first is the observation that if the laws of $(x(n), M(n+1))$ are Lebesgue-continuous, then we may replace the commonly used Krasovskii solution by the more restrictive Filippov solution \cite{Filippov} for the limiting differential equation. This is facilitated by a remarkable result from \cite{Buckdahn}. 

The second result is in the spirit of another remarkable piece of work \cite{Bianchi} that characterizes the limiting behavior of suitably averaged `occupation measures' $\mu(dxdz|t)$ defined below, as $n\to\infty$. While \cite{Bianchi} uses classical analytic tools such as Young measures and concepts from topological dynamics, we take a control theoretic view based on relaxed controls and Stockbridge's extension of Echeverria's theorem \cite{Echeverria} to controlled martingale problems \cite{Stockbridge}. (See \cite{Bhatt} for a further extension and \cite{Ari}, Chapter 6, for a detailed exposition.) In addition to recovering variants of the results of \cite{Bianchi}, this also gives some additional insights.

\subsection{Filippov solution}

Let $\N$ denote the collection of Lebesgue-null sets in $\R^d$. A Fillipov solution to (\ref{ode}) replaces $K_h(x)$ above by
$$F_h(x) :=  \cap_{N\in\N}\cap_{\delta>0}\overline{co}\left(\{h(y) : \|y-x\|<
\delta\}\backslash N\right).$$
That is, we consider the differential inclusion limit given by
\begin{equation}
\dot{x}(t) \in F_h(x(t)). \label{Filippov}
\end{equation}
The elimination of sets of zero Lebesgue measure does matter. Consider, e.g., the following example.\\

\noindent \textbf{Example 1:} Consider the two dimensional case where $h: \R^2 \mapsto \R^2$ is given by
\begin{eqnarray*}
h(x,y) &=& [1,-1], \ \ \ \ y > 0,\\
&=& [1,1], \ \ \ \ \ \ y < 0,\\
&=& [-1,0], \ \ \ \  y=0.
\end{eqnarray*}
One can easily see that the Filippov solution will be
\begin{eqnarray*}
\dot{x}(t) &=& \ 1, \ \ \ \ \ \ \ \ \  \\
\dot{y}(t) &=& -1, \ \ \ \ \ \ \ \ y>0, \\
&=& \ 1, \ \ \ \ \ \ \ \ \ y< 0, \\
&=& \  0, \ \ \ \ \ \ \ \ \  y=0.
\end{eqnarray*}
In particular, letting $[x,y]$ denote a generic vector in $\R^2$, we have for $y(t) = 0$, $\frac{d}{dt}[x(t),y(t)] =[1,0] \neq [-1,0]$. A Krasovskii solution would only tell us that for $y(t) = 0$, $\frac{d}{dt}[x(t),y(t)] \in \overline{co}\Big([1,-1], [1,1], [-1,0]\Big)$.\\

Assume that:\\

\noindent \textbf{$(\dagger)$}  The laws of $(x(n), M(n+1))$ are absolutely continuous with respect to the Lebesgue measure for all $n$.\\

This holds, e.g., if the law of $x(0)$ and regular conditional law of $M(n+1)$ given $x(n)$ is absolutely continuous w.r.t.\ the Lebesgue measure for all $n$ (`a.s.' in the latter case). Our first main result is:

\begin{theorem}\label{Fil} The $\{x(n)\}$ generated by (\ref{RM}) with a discontinuous $h$ will  asymptotically track a Filippov solution of (\ref{Filippov}), a.s. \end{theorem}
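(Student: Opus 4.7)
The plan is to combine the existing Krasovskii-tracking theorem with the result of \cite{Buckdahn} that upgrades a Krasovskii solution to a Filippov one under absolute continuity of marginal laws. Concretely, I would first recast (\ref{RM}) as an instance of (\ref{RM2}) with $z(n) := h(x(n)) \in K_h(x(n))$ and $H := K_h$. Since $K_h$ is nonempty, compact, convex valued, upper semicontinuous, and inherits the linear growth of $h$, Theorem \ref{Benaim2} applies and yields, almost surely, a family of absolutely continuous curves $y^n(\cdot)$ on $[t(n), t(m(n))]$ with $y^n(t(n)) = x(n)$ and $\dot{y}^n(t) \in K_h(y^n(t))$ a.e., along which the tracking estimate (\ref{track}) holds.

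Next, I would verify that under $(\dagger)$ the law of the interpolated iterate $\bar{x}(t)$ is absolutely continuous with respect to Lebesgue measure on $\R^d$ for every $t \geq 0$, not just at the grid times $t(n)$. For $t = t(n)$ this is exactly $(\dagger)$; for $t \in (t(n), t(n+1))$, writing $\bar{x}(t) = x(n) + \lambda a(n)\bigl(h(x(n)) + M(n+1)\bigr)$ with $\lambda := (t - t(n))/a(n) \in (0,1]$, the map $(x,m) \mapsto x + \lambda a(n)(h(x) + m)$ has partial Jacobian $\lambda a(n) I$ in the $m$-variable, so the pushforward of the absolutely continuous joint law of $(x(n), M(n+1))$ is again absolutely continuous.

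Finally, I would invoke the result of \cite{Buckdahn}, which asserts that any Krasovskii solution that arises as an almost sure uniform limit of random absolutely continuous curves with Lebesgue-absolutely continuous marginal laws is in fact a Filippov solution. The underlying heuristic is that $K_h$ and $F_h$ can differ only through values of $h$ on Lebesgue-null exception sets, and by Fubini the absolute continuity of the marginals of the approximating sequence prevents the trajectory from spending positive Lebesgue-measure time on any such set, leaving only Filippov-type behaviour in the limit. This upgrades the Krasovskii tracking curves $y^n(\cdot)$ of step one to Filippov solutions, giving the desired conclusion.

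The main obstacle is this third step: a uniform limit of absolutely continuous laws need not itself be absolutely continuous, so step two alone gives no regularity of the limit, and one has to lean on the precise content of \cite{Buckdahn} to conclude that approximation by AC-marginal curves is already sufficient to force the limit into $F_h$. A secondary technical point is to check that the almost sure tracking of step one, together with the stability assumption (\ref{stable}), provides the measurability and tightness hypotheses implicit in that result.
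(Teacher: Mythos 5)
Your step one (recasting the iteration as a stochastic recursive inclusion with $H = K_h$ and invoking Theorem \ref{Benaim2}) and step two (absolute continuity of the law of $\bar{x}(t)$) are fine as far as they go, but the decisive step three has a genuine gap: the result you attribute to \cite{Buckdahn} is not what that reference contains. Proposition 2 of \cite{Buckdahn}, which is what the paper uses, is a purely deterministic statement about set-valued maps: (i) $f(x)\in F_f(x)$ for Lebesgue-a.e.\ $x$, and (ii) there is a measurable modification $\tilde f = f$ a.e.\ with $K_{\tilde f}(x) = F_f(x)$ for \emph{all} $x$. There is no theorem there saying that a Krasovskii solution arising as a limit of random absolutely continuous curves with Lebesgue-continuous marginals is a Filippov solution, and your own heuristic does not supply one: once you have passed to the limit in step one, the curves $y^n(\cdot)$ are deterministic Krasovskii solutions for the fixed sample path, and nothing prevents them from exhibiting genuinely non-Filippov behaviour (as in Example~1, where a Krasovskii solution may slide along $\{y=0\}$ with velocity $[-1,0]$). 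The absolute continuity of the laws of the iterates does not transfer to any regularity of these limiting deterministic trajectories; indeed, as you note yourself, uniform limits destroy absolute continuity, so the order of operations is fatal.

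The fix, which is the paper's route, is to use assumption $(\dagger)$ \emph{before} taking the ODE limit, at the level of the discrete iterates. By part (ii) of the Buckdahn lemma, pick $\tilde h = h$ a.e.\ with $K_{\tilde h} = F_h$ everywhere. Since the law of $x(n)$ is Lebesgue-continuous, $\tilde h(x(n)) = h(x(n))$ a.s.\ for every $n$, so the iteration run with $\tilde h$ produces a.s.\ the same sequence $\{x(n)\}$ (by induction on $n$). Now apply the standard Krasovskii tracking theory to $\tilde h$: the iterates track $\dot x \in K_{\tilde h}(x) = F_h(x)$, which is exactly the Filippov inclusion for the original $h$. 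In short, the null-set surgery must be performed on the driving function $h$ of the discrete recursion, where $(\dagger)$ has traction, not on the limiting solutions, where it has none.
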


\begin{proof} We use the following result from \cite{Buckdahn}.

\begin{lemma} Let $f : \R^d \mapsto \R^d$ be measurable and locally bounded. Then,
\begin{enumerate}

\item for  a.e.\ ($:=$ almost everywhere with respect to the Lebesgue measure) $x$, $f(x) \in F_f(x)$ and $F_f(\cdot)$ is the smallest closed convex valued upper semicontinuous set valued map for which this holds, and,

\item there exists a measurable $\tilde{f} = f$ a.e.\ such that $F_f(x) = K_{\tilde{f}}(x) \ \forall x\in\R^d$.
\end{enumerate}
\end{lemma}

See Prop.\ 2, pp.\ 231-234, \cite{Buckdahn}, for a proof. In fact, it is easy to see that if one  considers the convolutions $f^\delta := f*\varphi^\delta$ where $\varphi^\delta, \delta > 0,$ are smooth approximations to the Dirac measure at the origin that are supported in the ball of radius $\delta > 0$ centered at the origin, then $f = \lim_{\delta\downarrow 0}f^\delta$ at all Lebesgue points of $f$ \cite{Jost}, and therefore $f(x) \in F_f(x)$ at all Lebesgue points of $f$. 

By the above lemma, there exists a measurable $\tilde{h} : \R^d \mapsto \R^d$ such that $\tilde{h} = h$ a.e.\ and $F_h(\cdot) = K_{\tilde{h}}(\cdot)$. Thus the iteration
$$\tilde{x}(n+1) = \tilde{x}(n) + a(n)(\tilde{h}(x(n)) + M(n+1)), \ n \geq 0,$$
with $\tilde{x}(0) = x(0)$ a.s., can be shown to satisfy $\tilde{x}(n) = x(n)$ a.s.\ by induction, in view of \textbf{($\dagger$)}. But by standard theory for stochastic recursive inclusions (see, e.g., \cite{Benaim1} or Chapter 5 of \cite{Borkar}), $\{\tilde{x}(n)\}$ asymptotically track a.s.\ the differential inclusion  
$$\dot{x}(t) \in K_{\tilde{h}}(x(t)) = F_{h}(x(t)),$$
in view of the foregoing. Then so does $\{x(n)\}$. The claim follows. 
\end{proof}

\subsection{Limiting empirical measures}

In this section we take a control theoretic view of (\ref{DiffIncl}). Let $f_i: \R^d \mapsto \R, i \geq 1,$ be a countable family of bounded twice continuously differentiable functions that has bounded first and second partial derivatives, chosen such that it forms a convergence determining class for $\PA(\R^d)$.  Note that for each $i\geq 1$,
$$\xi_i(n) := \sum_{m=0}^{n-1}a(m)\langle\nabla f_i, M(m+1)\rangle, \ n \geq 1,$$
is a square-integrable martingale w.r.t.\ $\{\F_n\}$.

\begin{lemma}\label{mg}  $\xi_i(n)$ converges a.s.\ as $n\to\infty, \ \forall i$. \end{lemma}

\begin{proof} By (\ref{stable}), the quadratic variation process $\{\langle \xi\rangle_n\}$ of $\{\xi_i(n)\}$ satisfies 
$$\langle\xi_i\rangle_n \leq K_i\sum_{m=0}^{n-1}a(m)^2E\left[\|M(m+1)\|^2|\F_m\right] \leq K_iK\sum_na(n)^2(1 + \|x(n)\|^2) < \infty \ \mbox{a.s.}$$
for a suitable bound $K_i$ on $\|\nabla f_i(\cdot)\|$ and $K$ as in (\ref{mgbound}). By Proposition VII.2.3(c), p.\ 149,  of \cite{Neveu}, $\xi_i(n)$ converges a.s.
\end{proof}

Likewise, we have:
\begin{lemma}\label{extra}  $\sum_na(n)^2\|M(n+1)\|^2 < \infty$ a.s. \end{lemma}
 
\begin{proof} Let
$$Z(n) := \sum_{m=0}^{n-1}a(n)^2\left(\|M(m+1)\|^2 - E\left[\|M(m+1)\|^2|\F_m\right]\right), \ n \geq 1.$$
By our assumption (\ref{mgbound}), $\sup_nE\left\|M(n)\|^4\right]<\infty.$ From this and \eqref{step}, it follows that  the quadratic variation process $\langle Z\rangle_n$ of $\{Z_n\}$ is uniformly bounded in mean square and therefore, is bounded a.s.  In turn,  this and  Proposition VII.2.3(c), p.\ 149,  of \cite{Neveu}, imply that $Z(n)$ converges a.s.\ as $n\uparrow\infty$. Since
$$ \sum_{m=0}^{n-1}a(n)^2E\left[\|M(m+1)\|^2|\F_m\right] \leq  K\sum_{m=0}^{n-1}a(n)^2\left(1 + \|x(m)\|^2\right) < \infty$$
a.s.\ (where $K > 0$ is as in (\ref{mgbound})), the claim follows. \end{proof}

Next, let $B, D$ be closed convex subsets of $\R^d$. We define $\U :=$ the space of measurable maps $[0,\infty) \mapsto \PA(B\times D)$  with the coarsest topology that renders continuous the maps
$$\mu(dxdy|\cdot) \in \U \mapsto \int_s^tg(y)\int_{B\times D}f(x,u)\mu(dxdu|y)dy, \ f\in C(B\times D), g\in L_2[s,t], t > s \geq 0.$$
This is a compact  metrizable topology, hence $\U$ is Polish (See, e.g., pp.\ 71-73 of \cite{Borkar}). In fact this is a special case of  the standard topology for relaxed (or `chattering') controls introduced by L.\ C.\ Young that later led to the more general notion of `Young measures' in calculus of variations \cite{Young}. 

Let $\Phi :=$ a zero probability set outside which (\ref{stable}) holds and the conclusions of Lemmas \ref{mg} and \ref{extra} hold. Define a $\PA((\R^d)^2)$-valued process $\mu(dxdz|t), t \in [0,\infty)$, by
$$\mu(dxdz|t) := \delta_{(x(n),z(n))}(dxdz),  \ t(n) \leq t < t(n+1), n \geq 0,$$
where $\{t(n)\}$ are defined as in the preceding section and $\delta_{(x',z')}(dxdz)$ denotes the Dirac measure at $(x',z')$. 
 Fix a sample point in $\Phi^c$. Choose $B, D \subset \R^d$ such that $(x(n), h(x(n))) \in B\times D \ \forall n$. This is possible by (\ref{stable}) and the linear growth condition on $F_h(\cdot)$. Define $\check{\mu}(dxdz|\cdot) \in \U$ by:
$$\int f(x,z)\check{\mu}(dxdz|t) := \frac{1}{t}\int_0^t\int f(x,z)\mu(dxdz|s)ds$$
for twice continuously differentiable $f \in C_b(B\times D)$ with bounded first and second partial derivatives. In particular,
$$\frac{1}{t(n)}\int_0^{t(n)}\int \langle\nabla f_i(x), z\rangle \mu(dxdz|t)dt = \frac{\sum_{k=0}^{n-1}a(k)\langle\nabla f_i(x(k)), z(k)\rangle}{\sum_{k=0}^{n-1}a(k)}.$$
Note that we work with a fixed sample path and therefore the sets $B,D$, though sample path dependent, are fixed. 

\begin{theorem} For the chosen sample path, any limit point of $\mu(dxdz|t + \cdot)$ in $\U$ as $t\uparrow\infty$ is of the form $\mu^*(dxdz)$ such that,

\begin{enumerate}

\item $\mu^*$ is supported on the graph of $K_h(\cdot)$ and if \textbf{$(\dagger)$} holds, on the graph of $F_h(\cdot)$, and,

\item there exists a stationary process $(X(\cdot), Z(\cdot))$ satisfying $\dot{X}(t) = Z(t)$ for which the marginal law of $(X(t), Z(t))$ is $\mu^*$ for all $t$. Furthermore, one may view the control $Z(\cdot)$ as  corresponding to a stationary Markov relaxed control $v(X(\cdot))$ for some measurable $v :x \in \R^d \mapsto \mathcal{P}(K_h(x))$, ($v :x \in \R^d \mapsto \mathcal{P}(F_h(x))$ if \textbf{$(\dagger)$} holds), i.e.,  $v(X_t)$ is the law of $Z_t$ for all $t\geq 0$.
\end{enumerate}
\end{theorem}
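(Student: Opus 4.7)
The plan is to combine a Taylor-plus-martingale identity with Stockbridge's extension of Echeverria's theorem to controlled martingale problems \cite{Stockbridge,Bhatt}. Since $\U$ is compact metrizable, any sequence $t_n \uparrow \infty$ will admit a subsequence along which $\mu(dxdz|t_n+\cdot)$ converges in $\U$ to some $\mu^*(dxdz|s)$; the goal is then to show that this limit is in fact constant in $s$ and that the corresponding measure on $B\times D$ satisfies an Echeverria-type identity against the generator of $\dot X = Z$. To this end, Taylor-expanding $f_i$ and summing gives
\begin{equation*}
f_i(x(m)) - f_i(x(n)) = \sum_{k=n}^{m-1} a(k)\langle \nabla f_i(x(k)), z(k)\rangle + (\xi_i(m)-\xi_i(n)) + O\!\Big(\sum_{k=n}^{m-1}a(k)^2(1+\|M(k+1)\|^2)\Big).
\end{equation*}
Choosing $m = \min\{k : t(k) \geq t_n + T\}$ and using Lemma \ref{mg} together with (\ref{stable})--(\ref{mgbound}) to kill the martingale and error terms almost surely, the Riemann-sum reading of the first term gives
\begin{equation*}
\int_{t_n}^{t_n+T}\!\int \langle \nabla f_i(x),z\rangle\, \mu(dxdz|r)\,dr \ = \ f_i(x(m)) - f_i(x(n)) + o(1).
\end{equation*}
Passing to the $\U$-limit against the admissible test weight $g(r)=\mathbf{1}_{[0,T]}(r)$ and the function $\langle \nabla f_i(x),z\rangle$, noting that the right side is uniformly bounded by $2\|f_i\|_\infty + o(1)$, and then dividing by $T$ and letting $T\uparrow\infty$, the Banach-limit time-average $\bar\mu^*(dxdz) := \lim_{T\to\infty}\frac{1}{T}\int_0^T \mu^*(dxdz|s)\,ds$ should satisfy the stationary identity $\int \langle \nabla f_i(x),z\rangle\,\bar\mu^*(dxdz) = 0$ for every $i$.

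For the support, I would observe that the graph of $K_h(\cdot)$ is closed by upper semicontinuity of the Krasovskii map, and $z(n) = h(x(n)) \in K_h(x(n))$ for every $n$, so every pre-limit $\mu(dxdz|\cdot)$, hence $\mu^*(\cdot|s)$ and $\bar\mu^*$, is concentrated on the graph of $K_h$. Under $(\dagger)$, Theorem \ref{Fil} (via the Buckdahn lemma) furnishes $\tilde h = h$ a.e.\ with $K_{\tilde h} = F_h$; the Lebesgue continuity of the laws of $x(n)$ forces $z(n) = \tilde h(x(n)) \in F_h(x(n))$ almost surely, and the support tightens to the graph of $F_h(\cdot)$.

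Finally, I would identify the generator of the controlled o.d.e.\ $\dot X = Z$ as $Af(x,z) := \langle \nabla f(x), z\rangle$, whose domain contains the countable convergence-determining family $\{f_i\}$. The identity $\int A f_i\,d\bar\mu^* = 0$ for every $i$ is exactly the hypothesis of Stockbridge's theorem \cite{Stockbridge} (and its extension in \cite{Bhatt}; see also Chapter 6 of \cite{Ari}), which produces a stationary process $(X(\cdot),Z(\cdot))$ in $B\times D$ with $\dot X(t) = Z(t)$ and $\mathrm{Law}(X(t),Z(t)) = \bar\mu^*$ for all $t$. Disintegrating $\bar\mu^*(dxdz) = \pi(dx)\,v(x,dz)$, the kernel $v(x,\cdot)$ is supported on $F_h(x)$ (respectively $K_h(x)$), and $Z(\cdot) = v(X(\cdot))$ is the desired stationary Markov relaxed control. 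Stationarity of $(X(\cdot),Z(\cdot))$ should then pin down $\mu^*(\cdot|s) = \bar\mu^*$ for a.e.\ $s$, so the original $\U$-limit is indeed time-invariant and of the form claimed. The hard part of this program will be the Tauberian averaging in the Echeverria step: the boundary term $f_i(x(m))-f_i(x(n))$ does not vanish pointwise but only after division by $T$, and one must exploit carefully that the $\U$-topology tests only against $L_2$ time-weights in order to absorb it into an $O(1/T)$ error without losing measurability in $s$.
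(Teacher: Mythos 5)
Your overall architecture coincides with the paper's: a Taylor expansion of $f_i$ along the iterates, Lemma \ref{mg} plus square-summability of $\{a(n)\}$ to kill the martingale and second-order terms, the resulting Echeverria-type identity $\int\langle\nabla f_i(x),z\rangle\,d\mu^*=0$, and then Theorem 4.7 of \cite{Stockbridge} for the stationary pair $(X(\cdot),Z(\cdot))$ and Corollary 2.1 of \cite{Bhatt} (Corollary 6.3.7 of \cite{Ari}) for the Markov relaxed control via disintegration. Two of your local choices are actually cleaner than the paper's: you keep the $(1+\|M(k+1)\|^2)$ factor in the Taylor remainder rather than bounding it by $Ca(k)^2$ outright, and your support argument (the graph of $K_h$ is closed and $z(n)=h(x(n))\in K_h(x(n))$ for every $n$; under $(\dagger)$, $h(x(n))=\tilde h(x(n))\in K_{\tilde h}(x(n))=F_h(x(n))$ a.s.\ by the Buckdahn lemma) is more direct than the paper's appeal to Theorems \ref{Benaim2} and \ref{Fil}.

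The genuine divergence, and the one place your argument breaks, is the averaging step. The paper takes limit points of the running Ces\`{a}ro average $\check\mu(dxdz|t)$ over $[0,t(n)]$, so the boundary term $f_i(\bar x(t(n)))-f_i(\bar x(0))$ is bounded by $2\|f_i\|_\infty$ and dies upon division by $t(n)\uparrow\infty$; there is no Tauberian step, and the limit object is a single measure by construction, so the question of time-invariance never arises. You instead take a $\U$-limit $\mu^*(\cdot|s)$ of the shifted, \emph{un-averaged} process $\mu(\cdot|t_n+\cdot)$, form its Banach-limit average $\bar\mu^*$, and then assert that stationarity of the Stockbridge process ``pins down'' $\mu^*(\cdot|s)=\bar\mu^*$ for a.e.\ $s$. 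That last step is a non sequitur: the stationary process $(X(\cdot),Z(\cdot))$ is a new object manufactured from $\bar\mu^*$ by Stockbridge's theorem, and its stationarity carries no information about the $s$-dependence of the limit $\mu^*(\cdot|s)$ of the empirical process. Indeed, if $\bar x(\cdot)$ tracks a periodic orbit of (\ref{Filippov}), the shifted occupation measures converge to a genuinely time-varying family $\delta_{(x^*(s),\dot x^*(s))}$, so no argument can establish time-invariance of that limit in general. The fix is simply to prove the statement for the averaged measures $\check\mu(\cdot|t)$ --- which is what the paper's proof does and what the closed-measure results of \cite{Bianchi} concern --- rather than trying to upgrade the conclusion to the un-averaged shifted process.
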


\begin{proof} Recall from the preceding section the definitions of $t(n), n \geq 0,$ and $\bar{x}(\cdot)$. Since $t(n)\to\infty$ and $t(n+1) - t(n) \to 0$, it suffices to consider a subsequence along $\{t(n)\}$ as $n\uparrow\infty$. For $i\geq 1$,
\begin{eqnarray*}
f_i(\bar{x}(t(n)) - f_i(\bar{x}(0)) &=& \sum_{k=0}^{n-1}(f_i(\bar{x}(t(k+1)) - f_i(\bar{x}(t(k)))) \\
&=& \sum_{k=0}^{n-1}a(k)\langle\nabla f_i(x(k)), z(k)\rangle +  \sum_{k=0}^{n-1}a(k)\langle \nabla f_i(x(k)), M(k+1)\rangle + \sum_{k=0}^{n-1}\zeta(k)
\end{eqnarray*}
by Taylor formula, where $\|\zeta(k)\| \leq Ca(k)^2\left(1+\|M(k+1)\|^2\right)$ for some constant $C > 0$. By our choice of the sample path, the conclusion of Lemma \ref{mg} applies and the second term is bounded. Likewise by Lemma \ref{extra}, so is the third term. Then dividing both sides by $t(n)$ and  letting $n\uparrow\infty$,  we have $t(n)\uparrow\infty$ and therefore
$$\frac{1}{n}\sum_{k=0}^{n-1}a(k)\langle\nabla f_i(x(k)), z(k)\rangle = \frac{1}{t(n)}\int_0^{t(n)}\int \langle\nabla f_i(x), z\rangle d\mu(dxdz|t)dt \to 0.$$
Suppose $\mu^*(dxdz)$ is a limit point of 
$\check{\mu}(dxdz| t)$ as $t\to\infty$. Then by passing to the limit along an appropriate subsequence, we have 
$$\int\langle\nabla f_i(x), z\rangle\mu^*(dxdz) = 0  \ \forall i.$$
Then the first claim follows from Theorem \ref{Benaim2} and Theorem \ref{Fil}.  By Theorem 4.7 of \cite{Stockbridge}, there exists a stationary process $(X(\cdot), Z(\cdot))$ such that $\dot{X}(t) = Z(t) \ \forall t$ and the marginal of $(X(t), Z(t))$ is $\mu^*$ for all $t$. This implies the first part of the second claim. The second part of the second claim follows from Corollary 2.1 of \cite{Bhatt}, see also Corollary 6.3.7, p.\ 230, of \cite{Ari}.
\end{proof}

\begin{remark} Note that the control being a stationary relaxed Markov process does not imply that the state process itself is time-homogeneous Markov, or even Markov, in this case. This is due to nonuniqueness of solutions: for some $t > 0$, we can take some solution up to $t$ and then choose the regular conditional law of the process after $t$ from the set of solution measures initiated at $X(t)$ as a non-trivial function of the entire trajectory up to time $t$. This yields a non-Markov solution. More generally, one can follow the recipe of Section 12.3 of \cite{SV} to generate non-Markov solutions. \end{remark}

\end{document}